\newtheorem{theorem}{Theorem}[section]
\newtheorem{lemma}[theorem]{Lemma}
\newtheorem{problem}{Problem}[section]
\newtheorem{example}{Example}[section]
\newtheorem{definition}{Definition}[section]
\begin{document}
\setcounter{page}{1}

\begin{center}
{\LARGE \bf  Canonical matrices with entries integers modulo $p$}
\vspace{8mm}

{\Large \bf Krasimir Yordzhev}
\vspace{3mm}

Trakia University \\
Stara Zagora, Bulgaria \\
e-mail: \url{krasimir.yordzhev@trakia-uni.bg}
\vspace{2mm}

\end{center}
\vspace{10mm}

\noindent
{\bf Abstract:} The work considers an equivalence relation in the set of all  $n\times m$ matrices with entries in the set $[p]=\{ 0,1,\ldots , p-1 \}$. In each element of the factor-set  generated by this relation, we define the concept of canonical matrix, namely the minimal element with respect to the lexicographic order.  We have found a necessary and sufficient condition for an arbitrary matrix with entries in the set $[p]$ to be canonical. For this purpose, the matrices are uniquely represented by ordered $n$-tuples of integers.\\
{\bf Keywords:} Permutation matrix, Weighing matrix, Hadamard matrix, Semi-canonical matrix, Canonical matrix, Ordered $n$-tuples of integers. \\
{\bf 2010 Mathematics Subject Classification:} 05B20, 15B36.
\vspace{5mm}

\section{Introduction  and notation}
This paper presents a generalization and an improvement of the results obtained in \cite{YORDZHEV2017122}.

Let $k$ and $p$ be integers, $k\le p$. By $[k,p]$  we denote the set $$[k,p] =\left\{ k,k+1,\ldots ,p\right\}$$  and by $[p]$  the set  $$[p] =[0,p-1] =\left\{ 0,1,2,\ldots ,p-1 \right\} .$$

With ${\mathcal M}_{n\times m}^p $ we will denote the set of all $n\times m$ matrices with entries in the set $[p]$.

When $p=2$, a matrix whose entries belong to the set $[2]=\{ 0,1\} $ is called \emph{binary} (or \emph{boolean}, or \emph{(0,1)-matrix}).

When $p=3$ a $n\times n$ matrix $H$ whose entries belong to the set $\{ 1,-1\}\equiv \{ 1,2\}\ (mod\ 3) $ is \emph{Hadamard} if $H H^T =n\, I_n$,
where $H^T$ is the transposed matrix of $H$ and $I_n$ is the $n\times n$ identity matrix. It is well known that $n$ is necessarily 1, 2, or a multiple of four \cite{Hedayat,Horadam}.

When $p=3$ a $n\times n$ matrix $W$ whose entries belong to the set $\{ 0,1,-1\} \equiv \{ 0,1,2\} \ (mod\ 3)$ is \emph{weighing matrix} of order $n$ with weight $k$, if
$W\, W^T =k\, I_n$. For more information on applications of weighing matrices, we refer the reader to \cite{KOUKOUVINOS199791}. A $n\times n$  weighing matrix $W$ with weight $k$ is Hadamard if $k=n$ (see \cite{Best2010}).

A square binary matrix is called a \textit{permutation matrix}, if there is exactly one 1 in every row and every column. Let us denote the group of all $n\times n$  permutation matrices by ${\mathcal P}_{n} $. It is well known (see \cite{Sachkov,tar2}) that the multiplication of an arbitrary real or complex matrix $A$ from the left with a permutation matrix (if the multiplication is possible) leads to permutation of the rows of the matrix $A$, while the multiplication of $A$ from the right with a permutation matrix leads to permutation of the columns of $A$.

A \emph{transposition} is a matrix obtained from the $n\times n$ identity matrix $I_n$ by interchanging two rows or two columns.
With ${\mathcal T}_{n} \subset {\mathcal P}_{n} $ we denote the set of all transpositions in ${\mathcal P}_{n} $, i.e. the set of all $n\times n$ permutation matrices, which multiplying from the left an arbitrary $n\times m$ matrix swaps the places of exactly two rows, while multiplying from the right an arbitrary $k\times n$ matrix swaps the places of exactly two columns.

\begin{definition}\label{Definition3}
Let $A,B\in {\mathcal M}_{n\times m}^p$. We will say that the matrices $A$ and $B$ are \emph{equivalent} and we will write
$$A{\rm \sim }B, $$
if there exist permutation matrices $X\in {\mathcal P }_{n} $ and $Y\in {\mathcal P}_{m} $, such that
$$A=XBY.$$
\end{definition}

\par In other words $A \sim B$, if $A$ is received from $B$ after a permutation of some of the rows and some of the columns of $B$.
Obviously, the introduced relation is an equivalence relation.

In each element of the factor-set  generated by the relation $\sim$ described in definition \ref{Definition3}, we define the concept of canonical matrix, namely the minimal element with respect to the lexicographic order. For this purpose, the matrices are uniquely represented by ordered $n$-tuples of integers. The purpose of this work is to get a necessary and sufficient condition for an arbitrary matrix with entries in the set $[p]$ to be canonical. This task is solved in the particular case where $p = 2$ in \cite{YORDZHEV2017122}.
The case where $p = 3$ will be useful in classification of Hadamard matrices and weighing matrices.

\section{Representation of matrices from ${\mathcal M}_{n\times m}^p$ via ordered n-tuples of integers}

Let $A=\left( a_{ij} \right)_{n\times m} \in {\mathcal M}_{n\times m}^p $, $1\le i\le n$, $1\le j\le m$ and let
\begin{equation}\label{x_i}
x_i =\sum_{j=1}^m a_{ij} p^{m-j} ,\ i=1,2,\ldots n.
\end{equation}

Obviously
\begin{equation}\label{p^m-1}
0\le x_{i} \le p^{m} -1\quad \textrm{for every} \quad i=1,2,\ldots n
\end{equation}
and $x_{i} $ is a natural number written in notation in the number system with the base $p$ whose digits are consistently the entries of the $i$-th row of $A$.

With $r(A)$ we will denote the ordered $n$-tuple
\begin{equation}\label{r(A)}
r(A)=\langle x_{1} ,x_{2} ,\ldots ,x_{n} \rangle .
\end{equation}

Similarly with $c(A)$ we will denote the ordered $m$-tuple
\begin{equation}\label{c(A)}
c(A)=\langle y_{1} ,y_{2} ,\ldots ,y_{m} \rangle ,
\end{equation}
where
\begin{equation}\label{y_i}
y_j =\sum_{i=1}^n a_{ij} p^{n-i},\quad 0\le y_{j} \le p^{n} -1,\quad j=1,2,\ldots m
\end{equation}
and $y_{j} $ is a natural number written in notation in the number system with the base $p$ whose digits are consistently the entries of the $i$-th column of $A$.

It is easy to see that for every  $A\in {\mathcal M}_{n\times m}^p $,  $c(A)=r(A^T )$ and $r(A)=c(A^T )$, where $A^T$ is the transposed matrix of $A$.

We consider the sets:

\[\begin{array}{lll} {{\mathcal R}_{n\times m}^p } & {=} &  [0,p^m -1]^n \\
      & =   &       {\left\{\langle x_{1} ,x_{2} ,\ldots ,x_{n} \rangle \; |\; 0\le x_{i} \le p^{m} -1,\; i=1,2,\ldots n\right\}} \\
{ }   & {=} & {\left\{r(A)\, |\; A\in {\mathcal M}_{n\times m}^p \right\}}  \end{array}\]
and

\[\begin{array}{lll} {{\mathcal C}_{n\times m}^p } & = & [0,p^n -1]^m \\
    & {=} & {\left\{\langle y_{1} ,y_{2} ,\ldots ,y_{m} \rangle \; |\; 0\le y_{j} \le p^{n} -1,\; j=1,2,\ldots m\right\}} \\
    & {=} & {\left\{c(A)\, |\; A\in {\mathcal M}_{n\times m}^p \right\}} \end{array}\]

Thus we define the following two mappings:
$$r: {\mathcal M}_{n\times m}^p \to {\mathcal R}_{n\times m}^p $$
and
$$c: {\mathcal M}_{n\times m}^p \to  {\mathcal C}_{n\times m}^p ,$$
which are bijective and therefore
$${\mathcal R}_{n\times m}^p \cong {\mathcal M}_{n\times m}^p \cong {\rm {\mathcal C}}_{n\times m}^p .$$

We will denote the lexicographic orders in ${\rm {\mathcal R}}_{n\times m}^p $ and in ${\rm {\mathcal C}}_{n\times m}^p$  with $<$.

\begin{example}\label{exexex1}
\rm Let
$$
A=\left(\begin{array}{cccc}
{1} & {0} & {3} & {2} \\
{0} & {2} & {1} & {0} \\
{0} & {1} & {1} & {3} \\
\end{array}\right) \in {\mathcal M}_{3\times 4}^4
$$
Then
$$x_1 =1\cdot 4^3 +0\cdot 4^2 +3\cdot 4^1 +2\cdot 4^0 =1\cdot 64 +0\cdot 16+3\cdot 4+2\cdot 1=78,$$
$$x_2 =0\cdot 4^3 +2\cdot 4^2 +1\cdot 4^1 +0\cdot 4^0 =0\cdot 64 +2\cdot 16+1\cdot 4+0\cdot 1=36,$$
$$x_3 =0\cdot 4^3 +1\cdot 4^2 +1\cdot 4^1 +3\cdot 4^0 =0\cdot 64 +1\cdot 16+1\cdot 4+3\cdot 1=23,$$
$$y_1 =1\cdot 4^2 +0\cdot 4^1 +0\cdot 4^0  =1\cdot 16+0\cdot 4+ 0\cdot 1=16,$$
$$y_2 =0\cdot 4^2 +2\cdot 4^1 +1\cdot 4^0  =0\cdot 16+2\cdot 4+1\cdot 1=9,$$
$$y_3 =3\cdot 4^2 +1\cdot 4^1 +1\cdot 4^0  =3\cdot 16+1\cdot 4+1\cdot 1=53,$$
$$y_4 =2\cdot 4^2 +0\cdot 4^1 +3\cdot 4^0  =2\cdot 16+0\cdot 4+3\cdot 1=35,$$
$$r(A)=\langle 78,36,23\rangle ,$$
$$c(A)= \langle 16,9,53,35\rangle .$$
\end{example}

\begin{theorem}\label{Theorem1}
Let $A$ be an arbitrary matrix from ${\mathcal M}_{n\times m}^p $. Then:

a) If $X_{1} ,X_{2} ,\cdots ,X_{s} \in {\rm {\mathcal T}}_{n} $ are such that
$$r(X_{1} X_{2} \ldots X_{s} A)<r(X_{2} X_{3} \ldots X_{s} A)<\cdots <r(X_{s-1} X_s A) <r(X_{s} A)<r(A),$$
then
$$c(X_{1} X_{2} \ldots X_{s} A)<c(A).$$

b) If $Y_{1} ,Y_{2} ,\cdots ,Y_{t} \in {\rm {\mathcal T}}_{m} $ are such that
$$c(AY_{1} Y_{2} \ldots Y_{t})<c(AY_{1} Y_{2} \ldots Y_{t-1})<\cdots <c(AY_1 Y_2 )< c(AY_1 )<c(A),$$
then
$$r(AY_{1} Y_{2} \ldots Y_{t} )<r(A).$$
\end{theorem}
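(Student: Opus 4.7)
The plan is to deduce part~(a) from a single-transposition lemma by iterating, and then to obtain part~(b) from part~(a) by transposing and using the identities $r(A)=c(A^T)$ and $c(A)=r(A^T)$ recorded just before the theorem.

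\emph{Single-transposition lemma.} The core claim is: if $X\in\mathcal{T}_n$ swaps rows $i<j$, $B=XA$, and $r(B)<r(A)$, then $c(B)<c(A)$. The key identity, derived directly from~(\ref{y_i}) since only the $i$-th and $j$-th summands of $y_k$ change under the swap, is
\begin{equation*}
y_k^{(B)}-y_k^{(A)} \;=\; (a_{jk}-a_{ik})\bigl(p^{n-i}-p^{n-j}\bigr), \qquad k=1,\ldots,m.
\end{equation*}
Because $i<j$, the factor $p^{n-i}-p^{n-j}$ is strictly positive, so the sign of $y_k^{(B)}-y_k^{(A)}$ coincides with that of $a_{jk}-a_{ik}$. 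On the row side, $r(B)$ differs from $r(A)$ only at positions $i$ and $j$, where $x_i$ and $x_j$ are swapped; strict inequality $r(B)<r(A)$ then forces $x_j<x_i$. Interpreting $x_i$ and $x_j$ as base-$p$ expansions of rows $i$ and $j$, let $k_0$ be the smallest column index with $a_{i k_0}\ne a_{j k_0}$; then $a_{j k_0}<a_{i k_0}$ and $a_{j k}=a_{i k}$ for all $k<k_0$. Feeding this into the displayed identity gives $y_k^{(B)}=y_k^{(A)}$ for $k<k_0$ and $y_{k_0}^{(B)}<y_{k_0}^{(A)}$, which is exactly $c(B)<c(A)$ in lex order.

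\emph{Iteration and part~(b).} Setting $B_0=A$ and $B_k=X_{s-k+1}B_{k-1}$ for $k=1,\ldots,s$, the hypothesis of~(a) reads $r(B_k)<r(B_{k-1})$ for every $k$; the lemma therefore gives $c(B_k)<c(B_{k-1})$ for every $k$, and transitivity of the lex order delivers $c(X_1\cdots X_s A)=c(B_s)<c(B_0)=c(A)$, as required. Part~(b) then follows by applying~(a) to $A^T$: since a transposition matrix is symmetric, $(AY_1\cdots Y_t)^T=Y_t\cdots Y_1 A^T$, and the identities $c(\cdot)=r((\cdot)^T)$, $r(\cdot)=c((\cdot)^T)$ convert the hypothesis of~(b) into the hypothesis of~(a) for $A^T$ with the row-transpositions $Y_t,\ldots,Y_1$ taken in that order; the conclusion of~(a) then translates back to $r(AY_1\cdots Y_t)<r(A)$.

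\emph{Main obstacle.} The only non-routine step is the single-transposition lemma, and the delicate point within it is the coincidence between the first column where the two swapped rows of $A$ differ and the first column index at which $c$ drops. The factorization $(a_{jk}-a_{ik})(p^{n-i}-p^{n-j})$ makes this alignment automatic once one observes that the positional-weight factor has a fixed sign, so isolating this identity is essentially the whole content of the proof; the iteration and the transpose symmetry are then purely formal.
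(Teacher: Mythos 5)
Your proof is correct and follows essentially the same route as the paper: both reduce to the single-transposition case, identify the first column index at which the two swapped rows differ, observe that $c$ is unchanged before that column and strictly drops there, and then chain the single steps together (your explicit iteration is just the paper's induction on $s$ unrolled). The only cosmetic differences are that you package the column comparison as the identity $y_k^{(B)}-y_k^{(A)}=(a_{jk}-a_{ik})\,(p^{n-i}-p^{n-j})$ where the paper compares base-$p$ digit strings directly, and that you derive part (b) from part (a) by transposing via $c(M)=r(M^{T})$ where the paper simply states that (b) is proved similarly.
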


\begin{proof}

a) Induction by $s$.

Let $s=1$ and let $X\in {\mathcal T}_{n} $ be a transposition which multiplying an arbitrary matrix $A=(a_{ij} )\in {\mathcal M}_{n\times m}^p $ from the left swaps the places of the rows of $A$ with numbers $u$ and $v$ ($1\le u<v\le n$), while the remaining rows stay in their places. In other words if
$$A=\left(
  \begin{array}{cccccc}
    a_{11} & a_{12} & \cdots & a_{1r} & \cdots & a_{1m} \\
    a_{21} & a_{22} & \cdots & a_{2r} & \cdots & a_{2m} \\
    \vdots & \vdots &        & \vdots &        & \vdots \\
    a_{u1} & a_{u2} & \cdots & a_{ur} & \cdots & a_{um} \\
    \vdots & \vdots &        & \vdots &        & \vdots \\
    a_{v1} & a_{v2} & \cdots & a_{vr} & \cdots & a_{vm} \\
    \vdots & \vdots &        & \vdots &        & \vdots \\
    a_{n1} & a_{n2} & \cdots & a_{nr} & \cdots & a_{nm} \\
  \end{array}
\right)
$$
then
$$X A=\left(
  \begin{array}{cccccc}
    a_{11} & a_{12} & \cdots & a_{1r} & \cdots & a_{1m} \\
    a_{21} & a_{22} & \cdots & a_{2r} & \cdots & a_{2m} \\
    \vdots & \vdots &        & \vdots &        & \vdots \\
    a_{v1} & a_{v2} & \cdots & a_{vr} & \cdots & a_{vm} \\
    \vdots & \vdots &        & \vdots &        & \vdots \\
    a_{u1} & a_{u2} & \cdots & a_{ur} & \cdots & a_{um} \\
    \vdots & \vdots &        & \vdots &        & \vdots \\
    a_{n1} & a_{n2} & \cdots & a_{nr} & \cdots & a_{nm} \\
  \end{array}
\right) ,
$$
where $a_{ij} \in [p]=\{ 0,1,\ldots ,p-1\}$, $1\le i\le n$, $1\le j\le m$.

Let $$r(A)=\langle x_{1} ,x_{2} ,\ldots x_{u-1},x_{u} ,\ldots x_{v-1},x_{v} ,\ldots ,x_{n} \rangle .$$

Then $$r(XA)=\langle x_{1} ,x_{2} ,\ldots x_{u-1},x_{v} ,\ldots x_{v-1},x_{u} ,\ldots ,x_{n} \rangle .$$

Since $r(XA)<r(A)$, then according to the properties of the lexicographic order  $x_{v} <x_{u} $. Let the representation of $x_{u} $ and $x_{v} $ in notation in the number system with the base $p$ (with an eventual addition of unessential zeros in the beginning if necessary) be respectively as follows:
$$x_{u} =a_{u1} a_{u2} \cdots a_{ur}\cdots a_{um} ,$$
$$x_{v} =a_{v1} a_{v2} \cdots a_{vr}\cdots a_{vm} .$$

Since $x_{v} <x_{u} $, then there exists an integer $r\in \{ 1,2,\ldots ,m\} $, such that $a_{uj} =a_{vj} $ when $j<r$,  and $a_{vr} <a_{ur}$.
Hence if $c(A)=\langle y_{1} ,y_{2} ,\ldots ,y_{m} \rangle $, $c(XA)=\langle z_{1} ,z_{2} ,\ldots ,z_{m} \rangle $, then $y_{j} =z_{j} $ when $j<r$, while the representation of $y_{r} $ and $z_{r} $ in notation in the number system with the base $p$ (with an eventual addition of unessential zeros in the beginning if necessary) is respectively as follows:
$$y_{r} =a_{1r} a_{2r} \cdots a_{u-1{\kern 1pt} r} a_{ur} \cdots a_{vr} \cdots a_{nr} ,$$
$$z_{r} =a_{1r} a_{2r} \cdots a_{u-1{\kern 1pt} r} a_{vr} \cdots a_{ur} \cdots a_{nr} .$$

Since $a_{vr} <a_{ur}$, then $z_{r} <y_{r} $, whence it follows that $c(XA)<c(A)$.

We assume that for every $s$-tuple of transpositions $X_{1} ,X_{2} ,\ldots ,X_{s} \in {\mathcal T}_{n} $ and for every matrix $A\in {\mathcal M}_{n\times m}^p $ from
$$r(X_{1} X_{2} \ldots X_{s} A)<r(X_{2} \cdots X_{s} A)<\cdots <r(X_{s} A)<r(A)$$
it follows that
$$c(X_{1} X_{2} \ldots X_{s} A)<c(A)$$
and let $X_{s+1} \in {\mathcal T}_{n} $ be such that
$$r(X_{1} X_{2} \ldots X_{s} X_{s+1} A)<r(X_{2} \cdots X_{s+1} A)<\cdots <r(X_{s+1} A)<r(A).$$

According to the above proved  $c(X_{s+1} A)<c(A)$.

We put

$$A_{1} =X_{s+1} A.$$

According to the induction assumption from
$$r(X_{1} X_{2} \ldots X_{s} A_{1} )<r(X_{2} \cdots X_{s} A_{1} )<\cdots <r(X_{s} A_{1} )<r(A_{1} )$$
it follows that
$$c(X_{1} X_{2} \cdots X_{s} X_{s+1} A)=c(X_{1} X_{2} \cdots X_{s} A_{1} )<c(A_{1} )=c(X_{s+1} A)<c(A),$$
with which we have proven a).

b) is proven similarly to a).
\end{proof}

In effect is also the dual to Theorem \ref{Theorem1} statement, in which  instead of the sign $<$ everywhere we put the sign $>$.

\begin{theorem}\label{dualth} {\rm ({\bf Dual theorem})}
Let $A$ be an arbitrary matrix from ${\mathcal M}_{n\times m}^p $. Then:

a) If $X_{1} ,X_{2} ,\cdots ,X_{s} \in {\rm {\mathcal T}}_{n} $ are such that
$$r(X_{1} X_{2} \ldots X_{s} A)>r(X_{2} X_{3} \ldots X_{s} A)>\cdots >r(X_{s-1} X_s A) >r(X_{s} A)>r(A),$$
then
$$c(X_{1} X_{2} \ldots X_{s} A)>c(A).$$

b) If $Y_{1} ,Y_{2} ,\cdots ,Y_{t} \in {\rm {\mathcal T}}_{m} $ are such that
$$c(AY_{1} Y_{2} \ldots Y_{t})>c(AY_{1} Y_{2} \ldots Y_{t-1})>\cdots >c(AY_1 Y_2 )> c(AY_1 )>c(A),$$
then
$$r(AY_{1} Y_{2} \ldots Y_{t} )>r(A).$$
\end{theorem}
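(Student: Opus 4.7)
The plan is to deduce Theorem \ref{dualth} from Theorem \ref{Theorem1} by a simple order-reversing involution on $\mathcal{M}_{n\times m}^p$, rather than rewriting the entire induction of Theorem \ref{Theorem1} with reversed inequalities. For $A=(a_{ij})\in\mathcal{M}_{n\times m}^p$, define its entry-wise complement $\bar A=(p-1-a_{ij})$, which is again an element of $\mathcal{M}_{n\times m}^p$.

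Next I would verify two compatibility properties of the complementation. Using the defining formula (\ref{x_i}), a direct computation gives $r(\bar A)=\langle p^m-1-x_1,\dots,p^m-1-x_n\rangle$ when $r(A)=\langle x_1,\dots,x_n\rangle$, and analogously $c(\bar A)=\langle p^n-1-y_1,\dots,p^n-1-y_m\rangle$. Since the map $t\mapsto p^m-1-t$ is a strictly decreasing involution on $[0,p^m-1]$, the definition of lexicographic order immediately yields
$$r(A)>r(B)\iff r(\bar A)<r(\bar B),\qquad c(A)>c(B)\iff c(\bar A)<c(\bar B).$$
Moreover, because permutation matrices only rearrange rows and columns without altering entries, complementation commutes with multiplication by permutation matrices: $\overline{XAY}=X\bar A Y$ for any $X\in\mathcal{P}_n$, $Y\in\mathcal{P}_m$.

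To prove part (a), I would start from the hypothesis $r(X_1\cdots X_sA)>r(X_2\cdots X_sA)>\cdots>r(X_sA)>r(A)$. Replacing $A$ by $\bar A$ and invoking the two properties above, this descending chain in $>$ becomes the descending chain
$$r(X_1\cdots X_s\bar A)<r(X_2\cdots X_s\bar A)<\cdots<r(X_s\bar A)<r(\bar A),$$
to which Theorem \ref{Theorem1}(a) applies and gives $c(X_1\cdots X_s\bar A)<c(\bar A)$. Translating back via the complementation equivalence for $c$ produces $c(X_1\cdots X_sA)>c(A)$. Part (b) is handled in exactly the same way, appealing to Theorem \ref{Theorem1}(b) instead.

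I do not anticipate a genuine obstacle: the argument is entirely formal once the two bookkeeping facts about complementation are in place. The only point that deserves a careful line or two in the write-up is the claim that $t\mapsto p^m-1-t$ reverses lexicographic order on $n$-tuples (and likewise for $m$-tuples with base $p^n$), which follows immediately from the equivalence $x_k>w_k \iff p^m-1-x_k<p^m-1-w_k$ applied at the first index of disagreement.
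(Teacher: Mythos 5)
Your argument is correct, but it takes a genuinely different route from the paper. The paper offers no written proof of Theorem \ref{dualth} at all: it simply asserts the dual statement, the implicit justification being that the induction proving Theorem \ref{Theorem1} goes through verbatim with every $<$ replaced by $>$ (the key step there, that the first differing digit of $x_u$ and $x_v$ controls the comparison of $y_r$ and $z_r$, is manifestly symmetric in the two signs). You instead reduce the dual statement to Theorem \ref{Theorem1} itself via the entry-wise complement $\bar A=(p-1-a_{ij})$. All three bookkeeping facts you rely on check out: $\bar x_i=(p-1)\sum_j p^{m-j}-x_i=p^m-1-x_i$, so $r(\bar A)$ is the complemented tuple; the map $t\mapsto p^m-1-t$ is strictly decreasing, so it reverses the lexicographic order at the first index of disagreement; and complementation commutes with row and column permutations, so $r(X_k\cdots X_s\bar A)=r(\overline{X_k\cdots X_sA})$ and the hypothesis chain with $>$ for $A$ translates exactly into the hypothesis chain with $<$ for $\bar A$. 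What your approach buys is a structural explanation of the duality --- the two theorems are literally conjugate under an order-reversing involution of $\mathcal{M}_{n\times m}^p$ --- at the modest cost of introducing an operation the paper does not otherwise use; the paper's route costs nothing new but asks the reader to re-run (or trust) the whole induction with flipped signs. One cosmetic remark: you call the chain $r(X_1\cdots X_s\bar A)<\cdots<r(\bar A)$ ``descending,'' which is harmless but worth tidying in a final write-up.
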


\section{Semi-canonical and canonical ${\mathcal M}_{n\times m}^p$-matrices}
\begin{definition}\label{Definition2}
\rm Let $A\in {\mathcal M}_{n\times m}^p$,
$r(A)=\langle x_{1} ,x_{2} ,\ldots ,x_{n} \rangle$ and
$c(A)=\langle y_{1} ,y_{2} ,\ldots ,y_{m} \rangle$.
We will call the matrix $A$ \emph{semi-canonical}, if
$$x_{1} \le x_{2} \le \cdots \le x_{n} $$
and
$$y_{1} \le y_{2} \le \cdots \le y_{m} .$$
\end{definition}

%\begin{problem}\label{probl1}
%For given $m$, $n$ and $p$, find all semi-canonical ${\mathcal M}_{n\times m}^p$-matrices satisfying  certain conditions.
%\end{problem}

%Particular cases of Problem \ref{probl1} are as follows:

%\begin{problem}\label{probl2}
%For given $n$ and $k$, find all  $n\times n$ semi-canonical weighing matrix with weight $k$.
%\end{problem}

%\begin{problem}\label{probl3}
%For given $n$, find all  $n\times n$ semi-canonical Hadamard matrices.
%\end{problem}

\begin{lemma}\label{Lemma1}
Let $A=\left( a_{st} \right)_{n\times m} \in {\mathcal M}_{n\times m}^p $ be a semi-canonical matrix. Then there exist integers $s,t$, such that $1\le s\le n$, $1\le t\le m$ and
\begin{equation} \label{GrindEQ__2_}
 a_{1 1} =a_{1 2} =\cdots =a_{1 s} =0,\quad 1\le a_{1, s+1} \le a_{1, s+2} \le \cdots \le a_{1 m} \le p-1, \end{equation}
\begin{equation} \label{GrindEQ__3_}
a_{1 1} =a_{2 1} =\cdots =a_{t 1} =0,\quad 1\le a_{t+1, 1} \le a_{t+2, 1} \le \cdots \le a_{n 1} \le p-1. \end{equation}
\end{lemma}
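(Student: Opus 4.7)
The plan is to observe that a semi-canonical matrix automatically has a non-decreasing first row $(a_{11},a_{12},\ldots,a_{1m})$ and a non-decreasing first column $(a_{11},a_{21},\ldots,a_{n1})$; once this is established, the integers $s$ and $t$ are simply the number of initial zeros in these two monotone sequences in $[p]$, and the chains of inequalities in (\ref{GrindEQ__2_}) and (\ref{GrindEQ__3_}) follow immediately.

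To prove the key monotonicity of the first row, I would exploit the base-$p$ interpretation in (\ref{y_i}): the leading digit of $y_j$ is $a_{1j}$, and the remaining digits contribute at most $(p-1)(p^{n-2}+\cdots+1)=p^{n-1}-1$. Suppose, toward a contradiction, that $a_{1j}>a_{1,j+1}$ for some $j$. Then
$$y_j\ \ge\ a_{1j}\,p^{n-1}\ \ge\ (a_{1,j+1}+1)\,p^{n-1}\ >\ a_{1,j+1}\,p^{n-1}+(p^{n-1}-1)\ \ge\ y_{j+1},$$
contradicting $y_j\le y_{j+1}$ from semi-canonicity. Thus $a_{1,1}\le a_{1,2}\le\cdots\le a_{1,m}$. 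Symmetrically, applying (\ref{x_i}) and $x_i\le x_{i+1}$, the first column satisfies $a_{1,1}\le a_{2,1}\le\cdots\le a_{n,1}$.

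To finish, I would take $s$ to be the largest index $j$ with $a_{1j}=0$ (and $s=0$ if $a_{11}\ne 0$), and $t$ the largest index $i$ with $a_{i1}=0$; monotonicity of the first row then forces $a_{1,s+1},\ldots,a_{1,m}$ to lie in $[1,p-1]$ and to be non-decreasing, yielding (\ref{GrindEQ__2_}), and analogously for (\ref{GrindEQ__3_}). I do not expect a real obstacle: the argument reduces to the standard dominance of the leading base-$p$ digit, plus the fact that a monotone sequence in $[p]$ splits cleanly into an initial block of zeros and a tail in $[1,p-1]$. The only mild point to flag is that the lemma's stated ranges $1\le s\le n$ and $1\le t\le m$ are slightly awkward (the natural ranges are $0\le s\le m$ and $0\le t\le n$, covering the all-zero and no-zero extremes of the row and column), but this is a cosmetic matter that does not affect the substance of the proof.
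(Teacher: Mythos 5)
Your proof is correct and follows essentially the same route as the paper: both argue by contradiction that a descent $a_{1j}>a_{1,j+1}$ in the first row would force $y_j>y_{j+1}$ via dominance of the leading base-$p$ digit, contradicting semi-canonicity, and then read off $s$ and $t$ as the lengths of the initial zero blocks. Your write-up is in fact slightly more careful than the paper's (which loosely assumes $a_{1p}\ge a_{1q}$ where strict inequality is needed), and your remark about the ranges of $s$ and $t$ correctly identifies a cosmetic slip in the lemma's statement.
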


\begin{proof}
Let $r(A)=\langle x_{1} ,x_{2} ,\ldots x_{n} \rangle $ and $c(A)=\langle y_{1} ,y_{2} ,\ldots y_{m} \rangle $.
We assume that there exist integers $p$ and $q$, such that $1\le p<q\le m$, $a_{1 p} \ge a_{1 q}$.
In this case $y_{p} >y_{q} $, which contradicts the condition for semi-canonicity of the matrix $A$. We have proven (\ref{GrindEQ__2_}). Similarly, we prove (\ref{GrindEQ__3_}) as well.
\end{proof}

\begin{definition}\label{Definition4}
\rm We will call the matrix $A\in {\mathcal M}_{n\times m}^p $ \emph{canonical matrix}, if $r(A)$ is the minimal element with respect to the lexicographic order in the set $\{ r(B)\; |\; B \sim A\}$.
\end{definition}

\begin{problem}\label{probl4}
For given $m$, $n$ and $p$, find all canonical ${\mathcal M}_{n\times m}^p$-matrices satisfying  certain conditions.
\end{problem}

Particular cases of Problem \ref{probl4} are as follows:

\begin{problem}\label{probl5}
For given $n$ and $k$, find all  $n\times n$ canonical weighing matrix with weight $k$.
\end{problem}

\begin{problem}\label{probl6}
For given $n$, find all  $n\times n$ canonical Hadamard matrices.
\end{problem}

If the matrix $A\in {\mathcal M}_{n\times m}^p $ is canonical and $r(A)=\langle x_{1} ,x_{2} ,\ldots ,x_{n} \rangle ,$ then obviously

\begin{equation} \label{GrindEQ__6_} x_{1} \le x_{2} \le \cdots \le x_{n} . \end{equation}

From Definition \ref{Definition4} immediately follows that there exists only one canonical binary matrix in every class on the equivalence relation $"\sim "$ (see Definition \ref{Definition3}).

\begin{lemma}\label{Corollary2}
If the matrix $A\in {\mathcal M}_{n\times m}^p $  is a canonical matrix, then $A$ is a semi-canonical matrix.
\end{lemma}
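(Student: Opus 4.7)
The plan is to show both inequality chains defining semi-canonicity. Assume $A$ is canonical and write $r(A)=\langle x_1,\ldots,x_n\rangle$, $c(A)=\langle y_1,\ldots,y_m\rangle$.

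First I would handle the row inequalities $x_1\le x_2\le\cdots\le x_n$, which is the easy half and is essentially inequality (\ref{GrindEQ__6_}) already noted as immediate. If there were indices $u<v$ with $x_u>x_v$, then taking the transposition $X\in\mathcal{T}_n$ that swaps rows $u$ and $v$ yields a matrix $XA$ with $r(XA)$ agreeing with $r(A)$ in the first $u-1$ coordinates and having $x_v<x_u$ in position $u$. By the definition of lexicographic order, $r(XA)<r(A)$, and since $XA\sim A$ this contradicts canonicity.

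The column inequalities $y_1\le y_2\le\cdots\le y_m$ are the non-obvious half, and this is where Theorem \ref{Theorem1} does the real work. Suppose, toward a contradiction, that there exist indices $j<j'$ with $y_j>y_{j'}$. Let $Y\in\mathcal{T}_m$ be the transposition that swaps columns $j$ and $j'$. Then $c(AY)$ differs from $c(A)$ only in coordinates $j$ and $j'$, where the entries are exchanged; because the first $j-1$ coordinates coincide and $y_{j'}<y_j$ in position $j$, we conclude $c(AY)<c(A)$.

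Now I apply Theorem \ref{Theorem1}(b) with $t=1$ and $Y_1=Y$: from $c(AY)<c(A)$ it follows that $r(AY)<r(A)$. But $AY\sim A$, so this contradicts the minimality of $r(A)$ in its equivalence class. Hence no such pair $(j,j')$ exists, and $A$ is semi-canonical. The only delicate point is translating a column-side lexicographic decrease into a row-side lexicographic decrease, which is precisely the content of Theorem \ref{Theorem1}(b); once that is invoked, the proof is immediate.
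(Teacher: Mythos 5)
Your proposal is correct and follows essentially the same route as the paper: rows handled via the immediate observation (\ref{GrindEQ__6_}), and columns by swapping an out-of-order pair to force $c(AY)<c(A)$ and then invoking Theorem \ref{Theorem1}(b) (with $t=1$) to transfer this to $r(AY)<r(A)$, contradicting canonicity. If anything, your citation of Theorem \ref{Theorem1}(b) alone is slightly cleaner than the paper's appeal to both Theorem \ref{Theorem1} and Theorem \ref{dualth}.
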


\begin{proof}
Let $A\in {\mathcal M}_{n\times m}^p $ be a canonical matrix and $r(A)=\langle x_{1} ,x_{2} ,\ldots ,x_{n} \rangle $. Then from (\ref{GrindEQ__6_}) it follows that $x_{1} \le x_{2} \le \cdots \le x_{n} $. Let $c(A)=\langle y_{1} ,y_{2} ,\ldots ,y_{m} \rangle $. We assume that there are $s$ and $t$ such that $s\le t$ and $y_{s} >y_{t} $. Then we swap the columns of numbers $s$ and $t$. Thus we obtain the matrix $A'\in {\mathcal M}_{n\times m}^p $, $A'\ne A$. Obviously $c(A')<c(A)$. From Theorem \ref{Theorem1} and Theorem \ref{dualth} it follows that $r(A')<r(A),$ which contradicts the minimality of $r(A)$.
\end{proof}

In the next example, we will see that the opposite statement of Lemma \ref{Corollary2} is not always true.

\begin{example}\label{Example1}
\rm We consider the matrices:
$$A=\left(\begin{array}{cccc}
{0} & {0} & {1} & {2} \\
{0} & {0} & {2} & {2} \\
{0} & {2} & {0} & {0} \\
{1} & {0} & {0} & {0}
\end{array}\right) \in {\mathcal M}_{4\times 4}^3$$
and
$$B=\left(\begin{array}{cccc}
{0} & {0} & {0} & {2} \\
{0} & {1} & {2} & {0} \\
{0} & {2} & {2} & {0} \\
{1} & {0} & {0} & {0} \end{array}\right)  \in {\mathcal M}_{4\times 4}^3 .$$

After immediate verification, we find that $A\sim B$. Furthermore $r(A)=\langle 5,8,18,27\rangle $, $c(A)=\langle 1,6,45,72\rangle $, $r(B)=\langle 2,15,24,27\rangle $,
$c(B)=\langle 1,15,24,54\rangle $. So $A$ and $B$ are two equivalent semi-canonical matrices, but they are not canonical. The canonical matrix in this equivalence class is the matrix
$$C=\left(\begin{array}{cccc}
{0} & {0} & {0} & {1} \\
{0} & {0} & {2} & {0} \\
{1} & {2} & {0} & {0} \\
{2} & {2} & {0} & {0} \end{array}\right) \in {\mathcal M}_{4\times 4}^3 ,$$
where $r(C)=\langle 1,6,45,72\rangle$ and $c(C)=\langle 5,8,18,27\rangle$.

\hfill $\square$
\end{example}

From Example \ref{Example1} immediately follows that there may be more than one semi-canonical element in a given equivalence class.

\section{Necessary and sufficient conditions for a ${\mathcal M}_{n\times m}^p$-matrix to be canonical}

Let $A=(a_{ij} )\in {\mathcal M}_{n\times m}^p$, $r(A)=\langle x_1 ,x_2 , \ldots , x_n \rangle$. We introduce the following notations:

\begin{description}
\item[$\nu_i (A)$]$\displaystyle=\nu (x_i ) $ = the number of nonzero entries in the $i$-th row of $A$, $i=1,2,\ldots n$.

\item[$Z_i (A)$]$\displaystyle =Z(x_i ) =\{ x_k \in r(A) |\; x_k =x_i \}$ -- the set of all rows $x_k \in r(A)$, such that $x_k =x_i$. By definition $x_i \in Z (x_i )$, $i=1,2,\ldots n$.

\item[$\zeta_i (A)$]$\displaystyle=\zeta (x_i ) = |Z_i (A) |$, $i=1,2,\ldots n$.
\end{description}

\begin{lemma}\label{tv5}
Let $A=(a_{ij} )\in {\mathcal M}_{n\times m}^p$, $r(A)=\langle x_1 ,x_2 , \ldots , x_n \rangle$ and let $x_1 \le x_2 \le \cdots \le x_n$. Then for each $i=2,3,\ldots ,n$, for which $x_{i-1} <x_i$, or $i=1$ the condition
$$ Z(x_i )=\{ x_i ,x_{i+1} ,\ldots , x_{i+\zeta (x_i ) -1} \} $$
is fulfilled.
\end{lemma}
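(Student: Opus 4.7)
The statement is a purely order-theoretic fact about the sorted sequence $x_1 \le x_2 \le \cdots \le x_n$; it makes no use of the matrix structure or of the base-$p$ encoding. The plan is therefore to argue directly from the monotonicity hypothesis, with no appeal to earlier theorems. Set $k = \zeta(x_i)$, so by definition $Z(x_i)$ is the set of exactly $k$ indices $j \in \{1, 2, \ldots, n\}$ at which $x_j = x_i$. The goal is to pin down this set as $\{i, i+1, \ldots, i+k-1\}$.

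The first step is to verify that $i$ is the smallest index in $Z(x_i)$. For $i = 1$ this is automatic. For $i \ge 2$, the extra hypothesis $x_{i-1} < x_i$ combined with the sortedness $x_1 \le \cdots \le x_{i-1}$ gives $x_j \le x_{i-1} < x_i$ for every $j < i$, so no such $j$ lies in $Z(x_i)$.

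The second step is to show that $Z(x_i)$ is a contiguous block of indices. If $a, b \in Z(x_i)$ with $a < b$ and $a \le c \le b$, then monotonicity yields $x_i = x_a \le x_c \le x_b = x_i$, forcing $x_c = x_i$, hence $c \in Z(x_i)$. Combining the two steps, $Z(x_i)$ is a contiguous set of cardinality $k$ whose minimum element is $i$, and therefore equals $\{i, i+1, \ldots, i+k-1\}$, as claimed.

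I do not anticipate any genuine obstacle here: both steps reduce to elementary use of the sortedness hypothesis, and the only bookkeeping concern is uniformly handling the two cases $i = 1$ and $i \ge 2$, which is resolved by the way the statement itself is phrased (namely, the conditions ``$i=1$ or $x_{i-1} < x_i$'' are the two situations in which $i$ is the first index of a run of equal values).
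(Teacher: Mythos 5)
Your proof is correct; the paper itself dismisses this lemma with the single word ``Trivial,'' and your two-step argument (the hypothesis ``$i=1$ or $x_{i-1}<x_i$'' makes $i$ the first index of its run, and sortedness makes each run contiguous) is exactly the elementary reasoning the paper leaves implicit. No discrepancy to report.
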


\begin{proof} Trivial.
\end{proof}

The formulation of the following theorem will help us to construct a recursive algorithm for obtaining all canonical ${\mathcal M}_{n\times m}^p$-matrices.

\begin{theorem}\label{mainth}
Let $A=(a_{ij} )\in {\mathcal M}_{n\times m}^p$, $r(A)=\langle x_1 ,x_2 , \ldots , x_n \rangle ,$ $c(A)=\langle y_1 ,y_2 ,\ldots ,y_m \rangle$, $s=\nu_1 (A)$, $t=\zeta_1 (A)$. Then $A$ is canonical if and only if  the following conditions are satisfied:
\begin{enumerate}
\item \label{cnd1} $x_1 \le x_2 \le \cdots \le x_n \le p^m -1$;
\item \label{cnd2} $\displaystyle \frac{p^s -1}{p-1} \le x_1 \le p^s -1$;
\item \label{cnd3} If $s>1$ then $y_{m-s+1} \le y_{m-s+2} \le y_m$;
\item \label{cnd4} For each  $i=2,3,\ldots ,n$, $\nu_1 (A) \le \nu_i (A)$;
\item \label{cnd5} Let $t<n$. Let an integer $i$ exist such that $t<i\le n$ and $\nu_i (A)=\nu_1 (A)=s$. Then  we successively get the matrices $A'$, $A''$ and $A'''$ in the following way:
    \begin{enumerate}
      \item We get the matrix $A'$ by moving the rows from the set $Z_i (A)$ so they become first;
      \item If $s=m$ then $A'' =A'$. Let $s<m$, $A' =(a_{ij}' )$ and let $\Upsilon =\{ j\; |\; a_{1\, j}' \ne 0 \} =\{ u_1 ,u_2 , \ldots u_s \}$. Then we get the matrix $A''$ by moving successively the $u_k$-th column $(k=1,2,\ldots , s)$ from $A'$ so it becomes last in $A''$;
      \item We get the matrix $A'''$ by sorting the last $s$ columns of $A''$ in ascending order.
    \end{enumerate}
Then $r(A)\le r(A''')$.
\item \label{cnd6} Let  $1\le t<n$ and $0\le s<m$. Let the matrix $B\in \mathcal{M}_{(n-t)\times (m-s)}^p$ be obtained from $A$ by removing the first $t$ rows and the last $s$ columns. Then $B$ is canonical.
\end{enumerate}
\end{theorem}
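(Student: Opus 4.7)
The statement has two directions that I would prove separately. Necessity (canonical implies conditions \ref{cnd1}--\ref{cnd6}) is handled condition-by-condition; sufficiency (the converse) is proved by induction on $n$ using condition \ref{cnd6} as the recursive step.

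For necessity, suppose $A$ is canonical. Conditions \ref{cnd1}, \ref{cnd2}, and \ref{cnd3} follow from Lemma~\ref{Corollary2} (canonical implies semi-canonical) together with Lemma~\ref{Lemma1}: the row-sort is direct, the structure of row~$1$ yields the bounds in \ref{cnd2}, and the column-sort property restricted to the last $s$ columns gives \ref{cnd3}. For \ref{cnd4}, if some $\nu_i(A)<s$, then moving row $i$ to the top and sorting its columns so that zeros go left and nonzeros go right in ascending order produces an equivalent matrix whose first-row value is at most $p^{\nu_i(A)}-1<(p^s-1)/(p-1)\le x_1$, contradicting the minimality of $r(A)$. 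Condition \ref{cnd5} is immediate because the construction of $A'''$ only permutes rows and columns of $A$. Finally, for \ref{cnd6}, any permutation of rows $t+1,\ldots,n$ together with any permutation of the first $m-s$ columns leaves rows $1,\ldots,t$ of $A$ pointwise fixed (those rows have zeros in the first $m-s$ columns) and hence preserves the prefix $x_1,\ldots,x_t$ of $r(A)$; so a strictly smaller $r(B')$ for some $B'\sim B$ would lift to a strictly smaller $r(A')$ for an $A'\sim A$.

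For sufficiency, assume $A$ satisfies \ref{cnd1}--\ref{cnd6}, and let $A'\sim A$; the base $n=1$ is immediate from \ref{cnd1} and \ref{cnd2}. By Lemma~\ref{Corollary2} I may replace $A'$ with the canonical representative of its class and so assume $A'$ is semi-canonical. The first step is to show $y_1(A')\ge x_1(A)$: the first row of $A'$ is a column-permutation of some row $a$ of $A$, so $y_1(A')$ is at least the minimum column-sorted value of $a$. If $\nu(a)>s$ this minimum exceeds $p^s-1\ge x_1$, while if $\nu(a)=s$ then either $a\in Z_1(A)$ (giving minimum exactly $x_1$ by Lemma~\ref{Lemma1}) or condition~\ref{cnd5} forces the sorted value of $a$ to be $\ge x_1$. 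Semi-canonicity of $A'$ then upgrades this to $y_i(A')\ge x_i(A)$ for all $i\le t$; if any of these is strict we are done.

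The main obstacle is the equality case $y_1=\cdots=y_t=x_1$. Here I would argue that the equivalence $A'=XAY$ is forced to split: $Y$ must separately permute the first $m-s$ and the last $s$ columns, and its restriction to the last $s$ columns must preserve the ordered pattern $c_1\le\cdots\le c_s$ of nonzeros in row~$1$, while $X$ must send the rows of $Z_1(A)$ to the first $t$ positions. The comparison of $r(A)$ and $r(A')$ on coordinates $t+1,\ldots,n$ then factors through the induced equivalence between the $(n-t)\times(m-s)$ submatrix $B$ of condition~\ref{cnd6} and its image $B'$, together with a contribution from the last $s$ columns of rows $t+1,\ldots,n$ which can be controlled because the structure of row~$1$ is already fixed. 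By condition~\ref{cnd6} and the inductive hypothesis, $B$ is canonical, so $r(B)\le r(B')$, and this propagates to $r(A)\le r(A')$. The most delicate point is verifying that the joint optimization over the row permutation and the two types of column permutation really does factor through $r(B)$ in the presence of ties among the $c_i$ or among the rows outside $Z_1(A)$; this is where conditions \ref{cnd3} and \ref{cnd5} work hand-in-hand with condition~\ref{cnd6} to eliminate the remaining ambiguity.
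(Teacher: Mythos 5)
Your necessity half follows the paper's own proof essentially step for step: conditions \ref{cnd1}--\ref{cnd3} from semi-canonicity (Lemma \ref{Corollary2}) together with Lemma \ref{Lemma1}; condition \ref{cnd4} by the same contradiction $x_1''\le p^{\nu_i(A)}-1<(p^s-1)/(p-1)\le x_1$; condition \ref{cnd5} as an immediate consequence of $A'''\sim A$; and condition \ref{cnd6} by lifting a hypothetical smaller $r(B')$ to a smaller $r(A')$ through the block decomposition $A=\bigl(\begin{smallmatrix} O & N\\ B & C\end{smallmatrix}\bigr)$, using that the first $t$ coordinates of $r$ are untouched and that the $C$-contribution, being $<p^s$, cannot offset a drop in the $B$-part. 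This is exactly the paper's argument (the paper's inequality chain at that point is garbled, but it is the same base-$p$ domination you invoke).

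The sufficiency half is where the genuine gap sits. Your first-row estimate is sound (a row with more than $s$ nonzeros column-sorts to at least $(p^{s+1}-1)/(p-1)>p^s-1\ge x_1$, and a row with exactly $s$ nonzeros is controlled by condition \ref{cnd5}), modulo the notational slip of writing $y_1(A')$ for the first coordinate of $r(A')$. But the equality case --- ruling out that moving some other block $Z_j(A)$ with $\nu_j=s$ to the top, followed by a rearrangement of columns, produces a strictly smaller $r$ --- is the entire content of the theorem, and you leave it at ``I would argue that the equivalence is forced to split'' and ``this is where conditions \ref{cnd3} and \ref{cnd5} work hand-in-hand with condition \ref{cnd6}.'' That is a description of what must be proved, not a proof: in particular you never show that the effect on coordinates $t+1,\ldots,n$ of the row permutation and of the two kinds of column permutation can actually be compared independently, which is the delicate point you yourself flag. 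To be fair, the paper's sufficiency argument is no more rigorous here --- it too proceeds by declaring the various permutations ``impossible'' by appeal to conditions \ref{cnd3}, \ref{cnd5} and \ref{cnd6} and concludes $r(A)\le r(U)$ --- so you have reproduced the paper's strategy including its weakest link. To close the argument one must show that any $A'\sim A$ whose $r$ agrees with $r(A)$ in the first $t$ coordinates arises, up to $r$-preserving permutations, from the configuration tested by condition \ref{cnd5} composed with an equivalence of the submatrix $B$, and then conclude by condition \ref{cnd6} and the base-$p$ domination already used in necessity.
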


\begin{proof}

\textbf{Necessity}. Let $A=(a_{ij} )\in {\mathcal M}_{n\times m}^p$ be a canonical matrix and let $r(A)=\langle x_1 ,x_2 , \ldots , x_n \rangle$, $c(A)=\langle y_1 ,y_2 ,\ldots ,y_m \rangle$.

Conditions \ref{cnd1} follows from the fact that every canonical matrix is semi-canonical (Lemma \ref{Corollary2}), so $x_1 \le x_2 \le \cdots \le x_n$  and from inequality (\ref{p^m-1}).

From equation (\ref{x_i}) and Lemma \ref{Lemma1} it follows that
$$x_1 =\sum_{j=1}^m a_{1j} p^{m-j} = \sum_{j=m-s+1}^m a_{1j} p^{m-j} \ge \sum_{j=m-s+1}^m 1\cdot p^{m-j} =\frac{p^s -1}{p-1}$$
and
$$x_1 = \sum_{j=m-s+1}^m a_{1j} p^{m-j} \le \sum_{j=m-s+1}^m (p-1) p^{m-j} =(p-1)\frac{p^s -1}{p-1} =p^s -1 .$$
Therefore condition \ref{cnd2} is true.

Conditions \ref{cnd3} follows from the fact that every canonical matrix is semi-canonical (Lemma \ref{Corollary2}).

We assume that an integer $i$, $2\le i\le n$ exists, such that $\nu_i (A)<\nu_1 (A)=s$ and let $\nu_i (A)=u<s$. Then a matrix $A'=(a_{i\, j}')\sim A$ exists such that $a_{i\, 1}'=a_{i\, 2}' =\cdots =a_{i\, m-u}' =0 $ and $1\le a_{i\, m-u+1}' \le a_{i\, m-u+2}' \le \cdots \le a_{i\, m}' \le p-1$. We move the $i$-th row  of $A'$ at first place and we obtain a matrix $A'' $. Obviously $A'' \sim A$. Let $r(A'' )=\langle x_1'' ,x_2'' ,\ldots ,x_n'' \rangle$.
From above proven condition \ref{cnd2}, it follows that $\displaystyle x_1 \ge \frac{p^s -1}{p-1} =p^{s-1} + p^{s-2} +\cdot +p^u +p^{u-1} +\cdots + p+1>p^u >p^u -1\ge x_1''$. Therefore $x_1 >x_1''$, i.e.  $r(A) > r(A'' )$, which is impossible, due to the fact that $A$ is canonical. Thus, condition \ref{cnd4} is true.

Condition \ref{cnd5} comes directly from the fact that $A$ is canonical and $r(A)\le r(U)$ for each matrix $U\sim A$.

Let $t=\zeta_1 (A)<n$ and let $s=\nu_1 (A)<m$. From the already proved conditions \ref{cnd1}, \ref{cnd2}, \ref{cnd4} and \ref{cnd5} and Lemma \ref{tv5}, it follows that $A$ is presented in the form:
\begin{equation}\label{kiki}
A= \left(
  \begin{array}{cc}
    O & N \\
    B & C \\
  \end{array}
\right) ,
\end{equation}
where $O$ is $t\times (m-s)$ matrix, all elements of which are equal to 0, $N$ is $t\times s$ matrix, all elements of which are equal to each other and which are not equal to 0 and all rows of the matrix $(O\ N)_{t\times m}$ coincide with the elements of the set $Z_1 (A)$, $B \in \mathcal{M}_{(n-t)\times (m-s)}^p$, $C \in \mathcal{M}_{(n-t)\times s}^p$.

Let $B' \sim B$ and let $B'$ be a canonical $\mathcal{M}_{(n-t)\times (m-s)}^p$-matrix. Then the following matrices $A' \in {\mathcal M}_{n\times m}^p$ and $C'\in \mathcal{M}_{(n-t)\times s}^p$ exist, such that $A' \sim A$, $C' \sim C$, $\displaystyle A'= \left(
  \begin{array}{cc}
    O & N \\
    B' & C' \\
  \end{array}
\right) $, and $C'$ is obtained from $C$ after an eventual permutation of the rows. Let $r(A')=\langle x_1' ,x_2' ,\ldots , x_n' \rangle$. Obviously $x_i' = x_i$ for all $i=1,2,\ldots t$. Let us assume that $B' \ne B$, i.e. $r(B')<r(B)$. Let $r(B) =\langle b_{t+1} ,b_{t+2} ,\ldots , b_{n} \rangle$, $r(B') =\langle b_{t+1}' ,b_{t+2}' ,\ldots , b_{n}' \rangle$, $r(C) =\langle c_{t+1} ,c_{t+2} ,\ldots , c_{n} \rangle$, $r(C') =\langle c_{t+1}' ,c_{t+2}' ,\ldots , c_{n}' \rangle$. From assumption it follows that there exist $i\in [t+1,n]$ such that $b_{t+1}' =b_{t+1}$, $b_{t+2}' =b_{t+2} ,\ldots , b_{i-1}' =b_{i-1}$ and  $b_i' < b_i$, i.e. $b_i' +1\le b_i $. Then $x_1' =x_1 ,x_2' =x_2 ,\ldots ,x_{i-1}' =x_{i-1}$.  Since $0\le c_k <p^s$ and $0\le c_i' <p^s$,  for each $i\in [t+1, n]$, then $x_i' =b_i' p^s +c_i' \le (b_i' +1)p^s +c_i' =b_i p^s +p^s +c_i' +c_i -c_i \le b_i p^s +p^s +p^s +c_i -0 < b_i p^s +c_i$. Consequently $r(A') < r(A)$. But $A$ is canonical, i.e.  $r(A)\le r(A')$, which is a contradiction. Therefore $B'=B$  and $B$ is canonical. Thus we have proved condition \ref{cnd6}.

\textbf{Sufficiency}. Let $A\in{\mathcal M}_{n\times m}^p$ satisfy conditions \ref{cnd1} $\div$ \ref{cnd6} and hence the conditions of Lemma \ref{tv5} are fulfilled. Let $r(A)=\langle x_1 ,x_2 , \ldots , x_n \rangle$ and $c(A)=\langle y_1 ,y_2 ,\ldots ,y_m \rangle$.

If $t=n$ then $x_1 =x_2 =\cdots =x_n$ and according to condition \ref{cnd3} it is easy to see that $A$ is canonical ${\mathcal M}_{n\times m}^p$-matrix.

If $t<n$ and $s=m$ then according to condition \ref{cnd1}, Lemma \ref{tv5} and conditions \ref{cnd4} and \ref{cnd5}  it is easy to see that $A$ is canonical ${\mathcal M}_{n\times m}^p$-matrix.

Let  $1\le t<n$ and $0\le s<m$. Let $U\sim A$ and let $U$ be a canonical ${\mathcal M}_{n\times m}^p$-matrix. Since the Conditions \ref{cnd1} $\div$ \ref{cnd6} are necessary for the canonicity of a matrix, consequently $U$ also satisfies these conditions. According to condition \ref{cnd4}
\begin{equation}\label{nu1=s}
\nu_1 (U) =\nu_1 (A)=s.
\end{equation}

Thus the matrix $U$ is represented in the form (\ref{kiki}) and let

\begin{equation}\label{star2}
A= \left(
  \begin{array}{cc}
    O & N \\
    B & C \\
  \end{array}
\right)
\quad \textrm{and} \quad
U= \left(
  \begin{array}{cc}
    O' & N' \\
    B' & C' \\
  \end{array}
\right) ,
\end{equation}

Let us assume that $U$ is obtained from $A$ only by permutation of the columns. In this case obviously $\zeta_1 (U) =\zeta_1 (A)=t$, $\nu_1 (U) =\nu_1 (A)=s$, $O'=O$, $N' \sim N$, $B' \sim B$ and $C' \sim C$.

Permutation of columns which are different each other and which belong only to the set $Y_1  = \{y_1 , y_2 , \ldots , y_{m-s} \}$ without permutation of different each other  rows is impossible in accordance with condition \ref{cnd6}.

Permutation of columns which are different each other and which belong only to the set $Y_2  = \{y_{m-s+1} , y_{m-s+2} , \ldots , y_{m} \}$ without permutation of different each other  rows is impossible in accordance with condition \ref{cnd3}.

Therefore there are $k,l$ such that $1\le k\le m-s <l \le m$ and the $k$-th column has become the $l$-th or the $l$-th column has become the $k$-th. Then according to condition \ref{cnd3} and equation (\ref{kiki}) easily see that it is impossible if we did not change the places of some rows.

Therefore $U$ is obtained from $A$ by swapping some of the rows.
Without loss of generality, we can assume that $U$ is obtained from $A$ in the beginning by swapping some rows, then (if it is necessary) swapping some columns.

Permutation of rows that belong only to the set $X_1  = \{x_1 , x_2 , \ldots , x_{t} \} =Z_1 (A)$ does not change the matrix $A$ because $x_1 =x_2 =\cdots =x_t$.

Permutation of rows that belong only to the set $X_2  = \{x_{t+1} , x_{t+2} , \ldots , x_n \}$ is impossible in accordance with condition \ref{cnd6}.

Therefore, taking into account the conditions  \ref{cnd1} and \ref{cnd4} and Lemma \ref{tv5}, we conclude that we have changed the first $t=\zeta_1 (A)$ rows with another equal to each rows of the set $Z_j (A)$, $t+1\le j\le n$. After that in order to obtain a matrix of kind (\ref{kiki}), if it is necessary, we have to change the places of some columns of the matrix $A$. According to conditions \ref{cnd3} and \ref{cnd5} it follows that $r(A)\le r(U)$. But $U$ is canonical, i.e. $r(U)\le l(A)$.
Therefore, $U=A$, i.e. $A$ is canonical.
\end{proof}

\makeatletter
\renewcommand{\@biblabel}[1]{[#1]\hfill}
\makeatother

\bibliographystyle{plain}
\bibliography{modn}

%\begin{thebibliography}{99}

% Sort references alphabetically per first author's surname.

%\end{thebibliography}

\end{document}